\newtheorem{thm}{Theorem}[section]
\newtheorem{prop}[thm]{Proposition}
\newtheorem{ques}[thm]{Question}
\newtheorem{fact}[thm]{Fact}
\theoremstyle{definition}
\newtheorem{defn}[thm]{Definition}
\theoremstyle{remark}
\newtheorem*{rem}{Remark}
\newtheorem{khprop}[thm]{Property}
\title{Rotors in Khovanov Homology}
\author{Joseph MacColl}
\date{}
\begin{document}

\maketitle

\begin{abstract}
Anstee, Przyticki and Rolfsen introduced the idea of rotants, pairs of links related by a generalised form of link mutation \cite{A-P-R}. We exhibit infinitely many pairs of rotants which can be distinguished by Khovanov homology, but not by the Jones polynomial.
\end{abstract}

\section{Introduction}\label{sec:bground}

The following is a long-standing open problem in low-dimensional topology:

\begin{ques}\label{ques:jones}
Does there exist a nontrivial knot with trivial Jones polynomial?
\end{ques}

This question has led to operations on links which can alter them while leaving their Jones polynomial fixed. It is known that mutation as originally defined by Conway is such an operation, but that this mutation cannot take an unknot to a nontrivial knot \cite{Rolfsen}. We consider a generalised mutation operation introduced by Anstee, Przyticki and Rolfsen \cite{A-P-R}:

\begin{defn}\label{defn:rotant}
Let $n\geq 3$, and $L$ be a fixed planar projection of a link containing a $2n$-ended tangle $R$. If $R$ possesses $n$-fold rotational symmetry about its centre then $R$ is called a \emph{rotor}, or \emph{$n$-rotor}, and $S:=L\setminus R$ is called the \emph{stator}. We rotate the rotor $R$ by $\pi$, through the third dimension, and reinsert it into $L$ to obtain a new link $L^R$. If we can fix orientations on $L$ and $L^R$ so that their writhes agree, then we say that the links are \emph{(order $n$) rotants}, or \emph{$n$-rotants}, of each other.
\end{defn}
See Figure~\ref{fig:first ex} for an example of order 4 rotants.
\\
\par
In the case of rotants, we know that the generalised mutation can preserve the Jones polynomial; indeed, the following was proven in \cite{A-P-R}, where the idea of rotants was introduced:

\begin{thm}\label{thm:rotant jones} 
Suppose $n\leq 5$. If the link $L^R$ is an $n$-rotant of $L$, then $L$ and $L^R$ have the same Jones polynomial.
\end{thm}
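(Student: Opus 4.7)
The plan is to work entirely with the Kauffman bracket. Since $L$ and $L^R$ have equal writhes by the definition of a rotant, it suffices to prove that $\langle L\rangle = \langle L^R\rangle$. First I would regard the $2n$-ended rotor as an element of the Temperley--Lieb algebra $TL_n$ and expand its Kauffman bracket as $\langle R\rangle_{TL}(A) = \sum_M c_M(A)\,M$, where $M$ ranges over the $C_n$ crossingless matchings on $2n$ boundary points. Pairing against the stator then gives
\[
\langle L\rangle(A) \;=\; \sum_M c_M(A)\,\langle S\cup M\rangle(A).
\]
The $n$-fold rotational symmetry of $R$ forces these coefficients to be constant on orbits of the cyclic rotation $\tau$ of the boundary, so $c_{\tau M} = c_M$ for every $M$; in particular, $\langle R\rangle_{TL}$ lies in the $\mathbb{Z}/n$-invariant subspace of $TL_n$.

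Next I would track how the $\pi$-rotation through the third dimension changes the state sum. Each state of $\phi R$ corresponds to a state of $R$ in which the roles of the $A$- and $A^{-1}$-smoothings are interchanged and the resulting crossingless matching is reflected by the combinatorial involution $\phi$ induced by the flip axis. This yields $\langle\phi R\rangle_{TL}(A) = \sum_M c_M(A^{-1})\,\phi(M)$, and hence $\langle L^R\rangle(A) = \sum_M c_M(A^{-1})\,\langle S\cup\phi M\rangle(A)$. The theorem is therefore equivalent to the algebraic claim that, for every $\mathbb{Z}/n$-invariant coefficient vector satisfying the writhe constraint, the element $\sum_M\bigl(c_M(A)\,M - c_M(A^{-1})\,\phi(M)\bigr)$ closes with every stator to zero.

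The main obstacle is establishing this last identity, and this is where the hypothesis $n\leq 5$ must be used. My plan is to group matchings by orbit under the full dihedral group $D_n = \langle\tau,\phi\rangle$ and, for each such orbit, to combine the cyclic invariance of the coefficients with the $A\leftrightarrow A^{-1}$ palindrome behaviour forced by the equality of writhes. The small Catalan numbers $C_3 = 5$, $C_4 = 14$, and $C_5 = 42$ mean that the orbit counts are modest, so the cancellations can be verified directly by enumerating the $D_n$-orbits of crossingless matchings and applying elementary skein bookkeeping. I would expect the argument to break at $n = 6$, in keeping with the known existence of rotants of higher order with distinct Jones polynomials, so the bound $n\leq 5$ is essentially tight and no purely formal strengthening of this orbit analysis will reach further.
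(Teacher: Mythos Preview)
First, note that the paper does not prove Theorem~\ref{thm:rotant jones} at all; it is quoted from \cite{A-P-R}. So there is no in-paper argument to compare against, only the original Anstee--Przytycki--Rolfsen proof.

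Your outline contains a genuine error in the second paragraph. The flip $\phi$ is a rotation by $\pi$ about an axis lying in the plane of the diagram; as such it is an \emph{orientation-preserving} isometry of $3$-space. On the level of diagrams it is the composition of a planar reflection (which on its own would interchange the $A$- and $A^{-1}$-smoothings) with a switch of every crossing (which interchanges them again). The two effects cancel: the $A$-smoothing of a crossing is carried to the $A$-smoothing of the image crossing. Hence
\[
\langle \phi R\rangle_{TL}(A)\;=\;\sum_M c_M(A)\,\phi(M),
\]
with \emph{no} substitution $A\mapsto A^{-1}$. Your subsequent plan to exploit an ``$A\leftrightarrow A^{-1}$ palindrome behaviour forced by the equality of writhes'' is therefore built on a false premise; the writhe hypothesis enters only at the very end, to convert equal brackets into equal Jones polynomials, and has nothing to do with the coefficients $c_M$.

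Once this is corrected, the problem collapses to a purely combinatorial statement: one must show that for $n\le 5$ every $\mathbb{Z}/n$-invariant element of $TL_n$ is automatically $\phi$-invariant, i.e.\ that every cyclic orbit of crossingless matchings on $2n$ points is setwise fixed by the reflection. This is precisely what is checked in \cite{A-P-R}, and your idea of enumerating orbits for $n=3,4,5$ is the right way to do it. The conclusion $\langle L\rangle=\langle L^R\rangle$ is then immediate from $c_{\phi(M)}=c_M$. For $n=6$ a cyclic orbit not fixed by $\phi$ appears, which is why the bound is sharp; no palindrome argument is needed or available.
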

\begin{rem}
The Kauffman bracket definition of the Jones polynomial of a link has a factor determined by the link's writhe, and for a pair of links with the same bracket to have matching Jones polynomials, this factor must agree for each link. This is why the condition on orientation is included in Definition~\ref{defn:rotant}.
\end{rem}

Rolfsen and Jin showed that the bound $n=5$ cannot be improved upon, by exhibiting a pair of order 6 rotants with different Jones polynomials \cite{J-R}.
\\
The following question was posed in \cite{A-P-R}, and is still open:

\begin{ques}\label{ques:unknot rotant}
Does there exist an unknot with an $n$-rotant which is nontrivially knotted?
\end{ques}
Clearly, an affirmative answer to this question for $n\leq 5$ would imply one for Question~\ref{ques:jones}.
\\
\par

Khovanov homology, an invariant of oriented links, is a bigraded homology theory whose Euler characteristic is the (unnormalised) Jones polynomial \cite{Khovanov}. Examples of links with the same Jones polynomial but distinct Khovanov homologies, such as the knots $5_1$ and $10_{132}$ in Rolfsen's table \cite{Bar-Natan}, indicate that Khovanov homology is a strictly stronger invariant than the Jones polynomial. Theorem~\ref{thm:main} (which we will prove in Section~\ref{sec:proof}) makes use of rotants as a further demonstration of Khovanov homology's heightened sensitivity compared to the Jones polynomial.

\begin{thm}\label{thm:main}
There exist infinitely many rotant pairs distinguishable by Khovanov homology but not by the Jones polynomial.
\end{thm}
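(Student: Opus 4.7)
The plan is to reduce Theorem~\ref{thm:main} to the existence of a single rotant pair with matching Jones polynomial but distinct Khovanov homology; once such a base pair is in hand, a local modification in the stator will produce an infinite family.

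First I would search for a base example. By Definition~\ref{defn:rotant} rotors have order $n\geq 3$, and Theorem~\ref{thm:rotant jones} guarantees that $n$-rotants with $n\leq 5$ have equal Jones polynomials automatically, so the task is to locate $n\in\{3,4,5\}$ together with a stator $S$ and rotor $R$ such that $Kh(L)\not\cong Kh(L^R)$. I would fix a simple stator (for instance built on a cabled Hopf-link skeleton or a short closed braid) and try a range of low-complexity rotors with the required $n$-fold symmetry, comparing $Kh(L)$ and $Kh(L^R)$ with existing software such as JavaKh or the Knot Atlas. Orientations must be chosen so that the writhes agree, as Definition~\ref{defn:rotant} demands.

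Given such a base pair $L, L^R$, I would produce infinitely many rotant pairs by taking connect sums of both links with an infinite family of pairwise distinct knots $K_j$, joined along a strand of $S$ well away from the rotor disk. Because the rotor disk is untouched, $L\# K_j$ and $L^R\# K_j$ are still $n$-rotants; choosing the obvious compatible orientations preserves writhe equality, so Theorem~\ref{thm:rotant jones} provides equal Jones polynomials for each $j$. The Khovanov connect-sum formula expresses $Kh(L\# K_j)$ as a graded module built functorially from $Kh(L)$ and $Kh(K_j)$, so from $Kh(L)\not\cong Kh(L^R)$ one deduces $Kh(L\# K_j)\not\cong Kh(L^R\# K_j)$ for every $j$. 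Choosing, say, $K_j=T_{2,2j+1}$ ensures that the ambient links themselves are pairwise non-isomorphic, yielding the required infinite family.

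The main obstacle is the first step: exhibiting a concrete small-crossing rotant pair with distinct Khovanov homology. The inflation to infinitely many pairs is essentially formal, but producing the base example requires a computer search, since no general principle is known that would force rotor mutation to alter $Kh$. That such examples exist is nonetheless plausible, given that rotor mutation for $n\geq 3$ is strictly more general than Conway mutation and that links with equal Jones polynomial but distinct Khovanov homology already appear at low crossing number, as with $5_1$ and $10_{132}$ mentioned in the introduction.
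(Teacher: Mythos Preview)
Your inflation step via connect sum is a genuinely different route from the paper's, but the key deduction ``$\text{Kh}(L)\not\cong \text{Kh}(L^R)$ implies $\text{Kh}(L\# K_j)\not\cong \text{Kh}(L^R\# K_j)$'' is not justified as written. There is no clean tensor-product formula for \emph{unreduced} Khovanov homology under connect sum; what one has is a long exact sequence (equivalently a mapping-cone description), and a construction that is merely functorial in the inputs need not be injective on isomorphism classes. The statement you want does hold for \emph{reduced} Khovanov homology over a field, where $\widetilde{\text{Kh}}(L_1\# L_2)\cong\widetilde{\text{Kh}}(L_1)\otimes\widetilde{\text{Kh}}(L_2)$ and the bigraded Poincar\'e polynomials lie in the integral domain $\mathbb{Z}[q^{\pm1},t^{\pm1}]$, so multiplication by a nonzero polynomial is injective. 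To salvage your argument you would need to verify that the base pair has distinct \emph{reduced} homology and work in that setting throughout.

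The paper instead inflates the base pair by inserting extra half-twists into the stator and tracks a single bigrading through the skein long exact sequence of Property~\ref{khprop:l.e.s.}. The $1$-resolution of the new crossing produces a fixed link $L_\infty$ (in two oriented versions), whose Khovanov homology is computed once and shown to vanish below an explicit quantum grading; the tracked grading $Q(n)$ is chosen to stay below that threshold, so the long exact sequence collapses to an isomorphism $\text{Kh}^{t+c}_{Q(n+1)}(L_{n+1})\cong\text{Kh}^{t}_{Q(n)}(L_{n})$, and likewise for $L^R_n$. This carries the single computed discrepancy $\dim\text{Kh}^{-32}_{-76}(L_0)=75\neq 74=\dim\text{Kh}^{-32}_{-76}(L_0^R)$ through all $n\ge 0$. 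The paper's approach stays entirely with unreduced homology and needs only the support bounds of Fact~\ref{fact:inf support} beyond the base computation; your approach, once repaired, would be shorter to state but relies on the reduced connect-sum formula and an additional reduced computation for the base pair.
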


We can show that no use of Conway mutation can transform an unknot into a nontrivial knot by combining the facts that Khovanov homology detects the unknot \cite{K-M} and that it is insensitive to mutation (when calculated over $\mathbb{Z}/2\mathbb{Z}$) \cite{Bloom,Wehrli}. This means that Conway mutation cannot be used to provide an affirmative answer to Question~\ref{ques:jones}, as is well-known. On the other hand, Theorem~\ref{thm:main} tells us that Khovanov homology can distinguish certain rotants, and so Question~\ref{ques:unknot rotant} remains a reasonable approach to Question~\ref{ques:jones}

\begin{figure}[ htbp ]
\centering
\includegraphics [width =15 cm, height =12 cm]{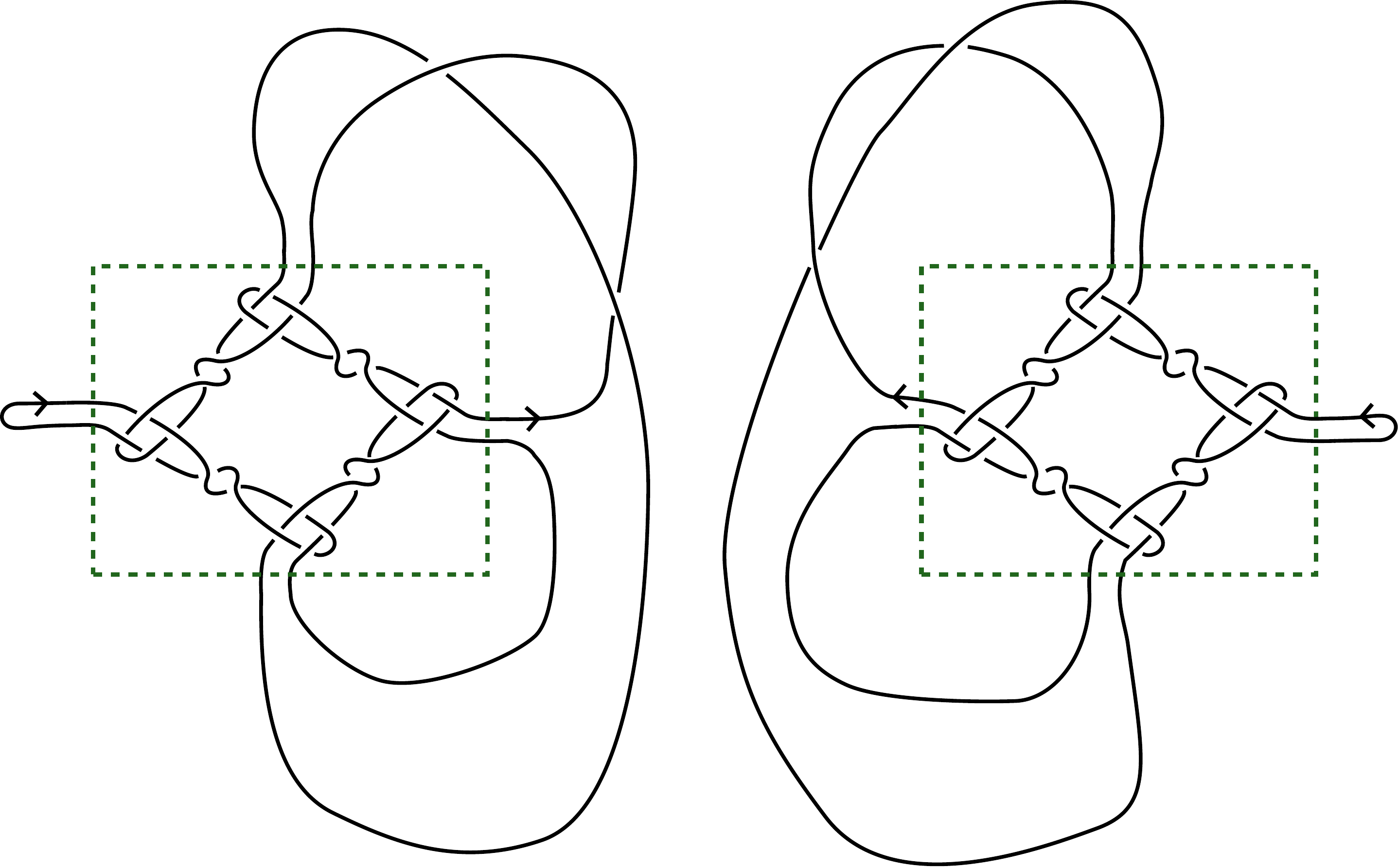}
\caption {A pair of order 4 rotants, adapted from \protect\cite[Example 1]{J-R}. Note that for clarity we have kept the rotor (inside the box) the same between the two diagrams and flipped the stator, a procedure equivalent to the one outlined in Definition~\ref{defn:rotant}. These rotants are distinguishable by Khovanov homology (see Section~\ref{sec:calcs}).\label{fig:first ex}}
\end{figure}

\section{Construction}\label{sec:proof}

We will now extend the example in Figure~\ref{fig:first ex} to an infinite family of pairs of order 4 rotants with different Khovanov homologies, by inserting additional twists between parallel strands in the diagrams. We specify pairs of rotants $(L(n),L^R(n)),\, n\in \mathbb{Z}$, by inserting $n$ right-handed half-twists at a  location in the stator, as shown in Figure~\ref{fig:param rotants}. With this notation, the pair of rotants presented in Figure~\ref{fig:first ex} is $(L(2),L^R(2))$.
\\
\\
We also introduce the following shorthand, used to describe the rotants in our infinite family:
\[
L_n:=L(-20-n)\hspace{30pt} L^R_n:=L^R(-20-n)
\]
where $n$ is a nonnegative integer.

\begin{figure}[ htbp ]
\centering
\includegraphics [width =15 cm, height =10 cm]{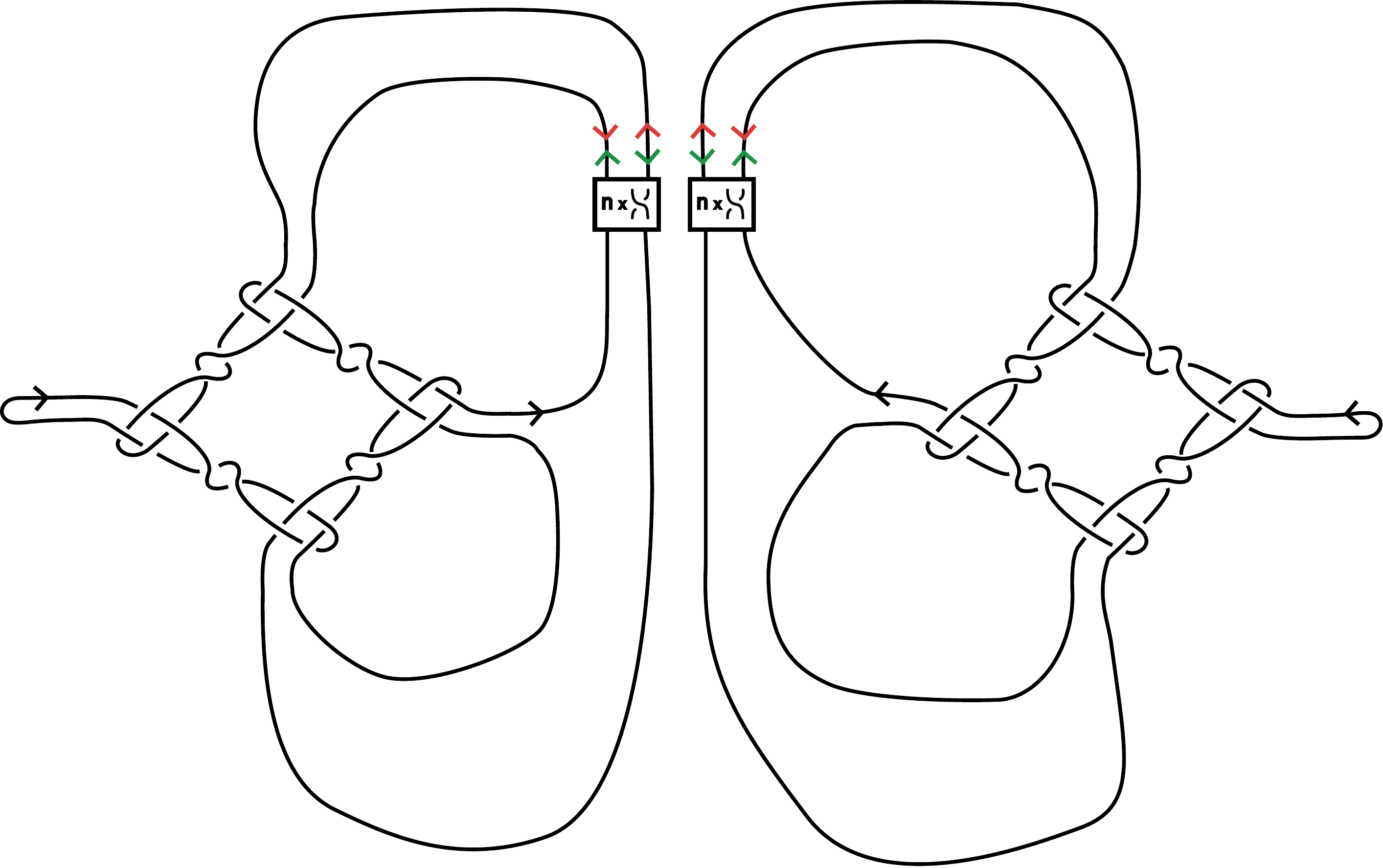}
\caption {The parametrised rotants $(L(n),L^R(n))$ we build our infinite family from. Note that the orientation above the parametrised twists depends on the parity of $n$. The green arrows show the orientation when $n$ is even, the red arrows when $n$ is odd.\label{fig:param rotants}}
\end{figure}

Our construction makes use of two key properties of Khovanov homology:

\begin{khprop}\label{khprop:kh orientation}
If $L$ and $L'$ are link diagrams that coincide, up to planar isotopy, as unoriented links, then $L$ and $L'$ will have identical Khovanov chain complexes up to an overall grading shift. This grading shift depends on the orientations of $L$ and $L'$ --- in particular it depends on the number of positive and negative crossings in each diagram. As a result, if these two quantities agree for both $L$ and $L'$, then $\text{Kh}(L)\cong\text{Kh}(L')$ \cite{Khovanov} (see also \cite{Turner}).
\end{khprop}

\begin{khprop}\label{khprop:l.e.s.}
There is a long exact sequence in Khovanov homology, described by Rasmussen \cite{Rasmussen} (see also \cite{Turner}). If we choose a crossing in $D$, the planar diagram of a link, then the sequence relates the Khovanov homologies of $D$ and the diagrams $D_0,\,D_1$ we get by giving this crossing a 0-resolution or a 1-resolution, respectively (see Figure~\ref{fig:crossing resolutions}). If the crossing resolved is oriented negatively, the long exact sequence, as presented by Turner, is:
\[
\dotsm\rightarrow\text{Kh}^t_{q+1}(D_1)\rightarrow\text{Kh}^t_q(D)\rightarrow\text{Kh}^{t-c}_{q-3c-1}(D_0)\rightarrow\text{Kh}^{t+1}_{q+1}(D_1)\rightarrow\dotsm
\]
where $c$ is defined:
\[
c= N_-(D_0)-N_-(D)
\]
The notation $\text{Kh}^t_q(L)$ refers to the vector space at quantum grading $q$ and homological grading $t$ in the Khovanov homology of the link $L$, and $N_-(L)$ is the number of negative crossings in (a diagram of) $L$.
\end{khprop}

\begin{figure}[ htpb ]
\centering
\includegraphics [width =7 cm, height =1 cm]{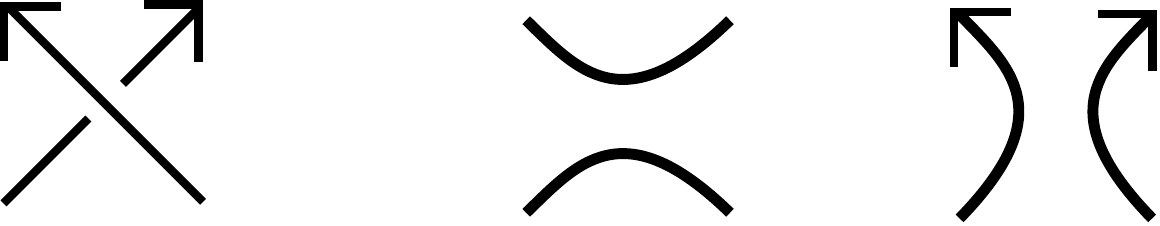}
\caption {The crossing configurations in the links we fit into the long exact sequence. Following the convention in \protect\cite{Turner}, from left to right we have: a crossing with negative orientation, its 0-resolution and its 1-resolution. Note that the 1-resolution inherits an orientation from the original crossing, while we can orient the affected strands of the 0-resolution as we please.\label{fig:crossing resolutions}}
\end{figure}

For any $n\in \mathbb{N}$, the crossings added by parametrisation to $L_n$ and $L^R_n$ are oriented negatively, and we can resolve the lowest of them in one of the two ways shown in Figure~\ref{fig:crossing resolutions}. The 0-resolution simply gives us $L_{n-1}$ or $L^R_{n-1}$, while the 1-resolution results in the 3-component rotants shown in Figure~\ref{fig:inf resolutions}, where we can always undo the remaining parametrised twists using type 1 Reidemeister moves. The orientation inherited on the newly formed component is dependent on the parity of $n$, as pictured. We call the rotants obtained from the oriented 1-resolution $(L^0_\infty, (L^0_\infty)^R)$ when $n$ is even, and $(L^1_\infty, (L^1_\infty)^R)$ when $n$ is odd.

\begin{figure}[ htbp ]
\centering
\includegraphics [width =15 cm, height =10 cm]{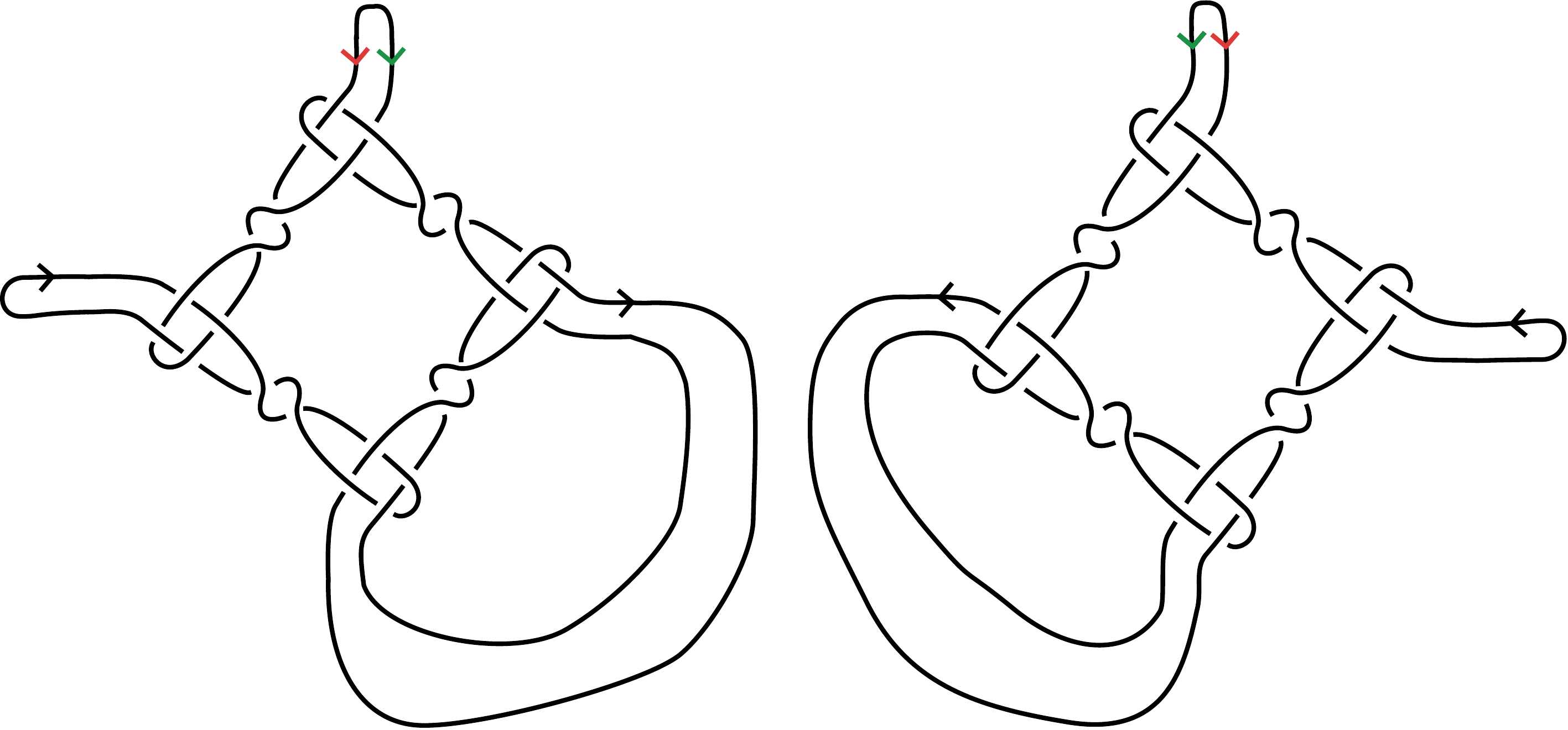}
\caption {The rotants we obtain from applying the 1-resolution to the lowest of the parametrised crossings in $(L_n,L^R_n)$. Taking the orientations given by the green arrows gives the pair $(L^0_\infty, (L^0_\infty)^R)$, while the red arrows give $(L^1_\infty, (L^1_\infty)^R)$, these corresponding to the cases when $n$ is even or odd, respectively. \label{fig:inf resolutions}}
\end{figure}

We observe three facts, essential to our construction:

\begin{fact}\label{fact:inf support}
$\text{Kh}^t_q(L^0_\infty)$ is trivial when $q<-53$ and $\text{Kh}^t_q(L^1_\infty)$ is trivial when $q<-29$.
\end{fact}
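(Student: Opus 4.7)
My plan is to reduce the bound for $L^1_\infty$ to the bound for $L^0_\infty$ via Property~\ref{khprop:kh orientation}, and then to establish the latter by a direct calculation on an explicit diagram. The first observation is that, after performing the 1-resolution at the lowest parametrised crossing and undoing the remaining $20+n-1$ parametrised half-twists by type~1 Reidemeister moves, the resulting diagrams depend on the parity of $n$ only through the orientation inherited on the newly-formed closed component (the green versus red arrows in Figure~\ref{fig:inf resolutions}). In particular, the underlying \emph{unoriented} diagrams obtained from the two cases coincide, so Property~\ref{khprop:kh orientation} tells us that the Khovanov chain complexes of $L^0_\infty$ and $L^1_\infty$ agree up to the grading shift dictated by the change in $n_+-2n_-$ under orientation reversal on that single component.

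The next step is to tally how many crossings between the new component and the remainder of the link flip sign when the orientation is reversed; this count should produce a quantum grading shift of exactly $+24$ between the two chain complexes. Granting this, any lower bound $q \geq -53$ for $\text{Kh}(L^0_\infty)$ automatically yields $q \geq -29$ for $\text{Kh}(L^1_\infty)$, so the entire fact reduces to the support bound for $L^0_\infty$.

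For that support bound I would work directly with the explicit 3-component diagram produced after the Reidemeister~I simplifications. One option is iterated application of the skein long exact sequence in Property~\ref{khprop:l.e.s.} to reduce to the Khovanov homology of smaller, well-understood links; a more pragmatic option is to invoke a computer implementation such as Bar-Natan's \texttt{KnotTheory} package. As a fallback, the elementary chain-level estimate $q \geq -k_0(D) + n_+(D) - 2n_-(D)$, where $k_0(D)$ is the number of circles in the all-zero resolution of a diagram $D$, is a genuine lower bound that requires only a count of crossings and of circles in an efficient diagram of $L^0_\infty$, and it should suffice to reach $-53$. The main obstacle is arithmetic bookkeeping rather than structural: once $L^0_\infty$ and $L^1_\infty$ are recognised as fixed links related by an orientation change on one component, what remains is a finite calculation to pin down the precise constants $-53$ and $-29$, and I would cross-check the orientation-reversal shift of $+24$ against the computed minimum quantum gradings.
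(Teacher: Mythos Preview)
Your proposal is correct and, at its core, coincides with the paper's proof, which is simply ``by direct calculation'' using the \texttt{KnotTheory\`{}} package (the full tables in Section~\ref{sec:calcs} exhibit minimal quantum gradings $-53$ and $-29$ for $L^0_\infty$ and $L^1_\infty$ respectively). Your orientation-reversal reduction from $L^1_\infty$ to $L^0_\infty$ is valid---the paper itself remarks at the end of Section~\ref{sec:calcs} that the two homologies differ only by a grading shift, and the tables confirm your $+24$ quantum shift---so this is a mild refinement rather than a different route.

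One caveat on your fallback: the chain-level bound $q \ge -k_0(D) + n_+(D) - 2n_-(D)$ is correct as a lower bound, but there is no reason to expect it to hit exactly $-53$ on a non-alternating $20$-crossing diagram; if it lands below $-53$ you have not proved the Fact as stated. (For the downstream use in Proposition~\ref{prop:l.e.s. isom} a weaker bound would in fact suffice, but the Fact itself asks for the sharp threshold, which the computation provides.)
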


\begin{proof}
By direct calculation; see Section~\ref{sec:calcs}.
\end{proof}

\begin{fact}\label{fact:inf rotant homologies}
The pair of rotants $(L^0_\infty,(L^0_\infty)^R)$ have identical Khovanov homology, as do $(L^1_\infty,(L^1_\infty)^R)$.
\end{fact}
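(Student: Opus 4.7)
My plan is to invoke Property~\ref{khprop:kh orientation}: I will argue that each of $(L^0_\infty,(L^0_\infty)^R)$ and $(L^1_\infty,(L^1_\infty)^R)$ consists of two diagrams coinciding, up to planar isotopy, as \emph{unoriented} diagrams, and carrying orientations with matching counts of positive and negative crossings. Given this, Property~\ref{khprop:kh orientation} immediately yields the stated isomorphism of Khovanov homologies.

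The first and main step is to analyse the rotor after the $1$-resolution. When the lowest parametrized crossing of $L_n$ or $L^R_n$ is $1$-resolved, the two formerly-crossing strands become a pair of disjoint horizontal arcs, and what were the remaining parametrized half-twists above them become a twist at the free end of a strand, which can be cancelled by Reidemeister~1 moves. What is left inside the rotor box is a much simpler tangle, as shown in Figure~\ref{fig:inf resolutions}. The key geometric claim, to be verified directly from the figure, is that this simplified rotor is invariant under the $\pi$-flip through the third dimension (the operation defining the rotant $L^R$) up to planar isotopy. I expect this to be visible either through a manifest reflective symmetry of the simplified rotor, or via a short explicit sequence of planar moves undoing the over/under swaps induced by the flip. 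Either way, the conclusion is that $L^0_\infty$ and $(L^0_\infty)^R$ (respectively $L^1_\infty$ and $(L^1_\infty)^R$) agree as unoriented diagrams up to isotopy.

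The remaining step is to match signed crossing counts. For the parametrized rotants $(L(n),L^R(n))$ this agreement was imposed by Definition~\ref{defn:rotant}; for the $1$-resolved pairs it should be inherited, since the $1$-resolution is applied identically in both diagrams of a pair and the orientation on the newly created component (green for $n$ even, red for $n$ odd) is fixed consistently on both sides. A direct tally of signs in Figure~\ref{fig:inf resolutions} confirms it. The main obstacle I expect is the pictorial step of exhibiting the unoriented $\pi$-flip invariance of the simplified rotor cleanly enough to be checkable; everything else is routine bookkeeping from the figures.
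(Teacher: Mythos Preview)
Your overall strategy---invoke Property~\ref{khprop:kh orientation} after showing the two diagrams coincide as unoriented links with matching signed crossing counts---is the same as the paper's. However, the geometric mechanism you propose rests on a misreading of the construction and does not work as stated.

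The parametrised twists lie in the \emph{stator}, not the rotor (see the text introducing Figure~\ref{fig:param rotants}). The $1$-resolution therefore simplifies the stator while leaving the rotor completely untouched; there is no ``simplified rotor'' to analyse. In particular, your key geometric claim---that the rotor after resolution is invariant under the $\pi$-flip through the third dimension---cannot hold: the rotor in $L^0_\infty$ is the very same rotor as in every $L_n$, and if it were flip-invariant then $L_n$ and $L^R_n$ would coincide as unoriented links for all $n$, contradicting the distinction the paper is trying to establish.

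The paper's actual observation (Figure~\ref{fig:inf rotant equivalence}) is different and exploits the $4$-fold symmetry of the rotor in a new way. After the $1$-resolution has simplified the stator, an anticlockwise \emph{planar} rotation by $\pi/2$ carries $(L^0_\infty)^R$ onto $L^0_\infty$: the rotor is fixed by this rotation precisely because it is a $4$-rotor, and the rotated flipped stator now matches the unflipped one. This rotation happens to reverse the orientation on every component; since reversing all orientations preserves the sign of each crossing, the positive and negative crossing counts agree automatically. That is a tighter argument than your ``inherited consistently from the construction'' bookkeeping, which as written does not address whether the planar isotopy identifying the two unoriented diagrams respects the chosen orientations.
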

\begin{rem}
This means that the result of Fact~\ref{fact:inf support} also holds if we replace $L_\infty^0$ with $(L^0_\infty)^R$, and $L_\infty^1$ with $(L^1_\infty)^R$.
\end{rem}

\begin{proof}
As can be seen from Figure~\ref{fig:inf rotant equivalence}, the rotants $(L^0_\infty,(L^0_\infty)^R)$ are identical as unoriented links. As oriented links, they differ by the reversal of orientation on all components, meaning that the rotants both have the same numbers of positive and negative crossings. So, by Property~\ref{khprop:kh orientation}, these rotants must have identical Khovanov homologies. By a similar argument, the rotants $(L^1_\infty,(L^1_\infty)^R)$ have identical Khovanov homologies.
\end{proof}

\begin{figure}[ htbp ]
\centering
\includegraphics [width =15 cm, height =10 cm]{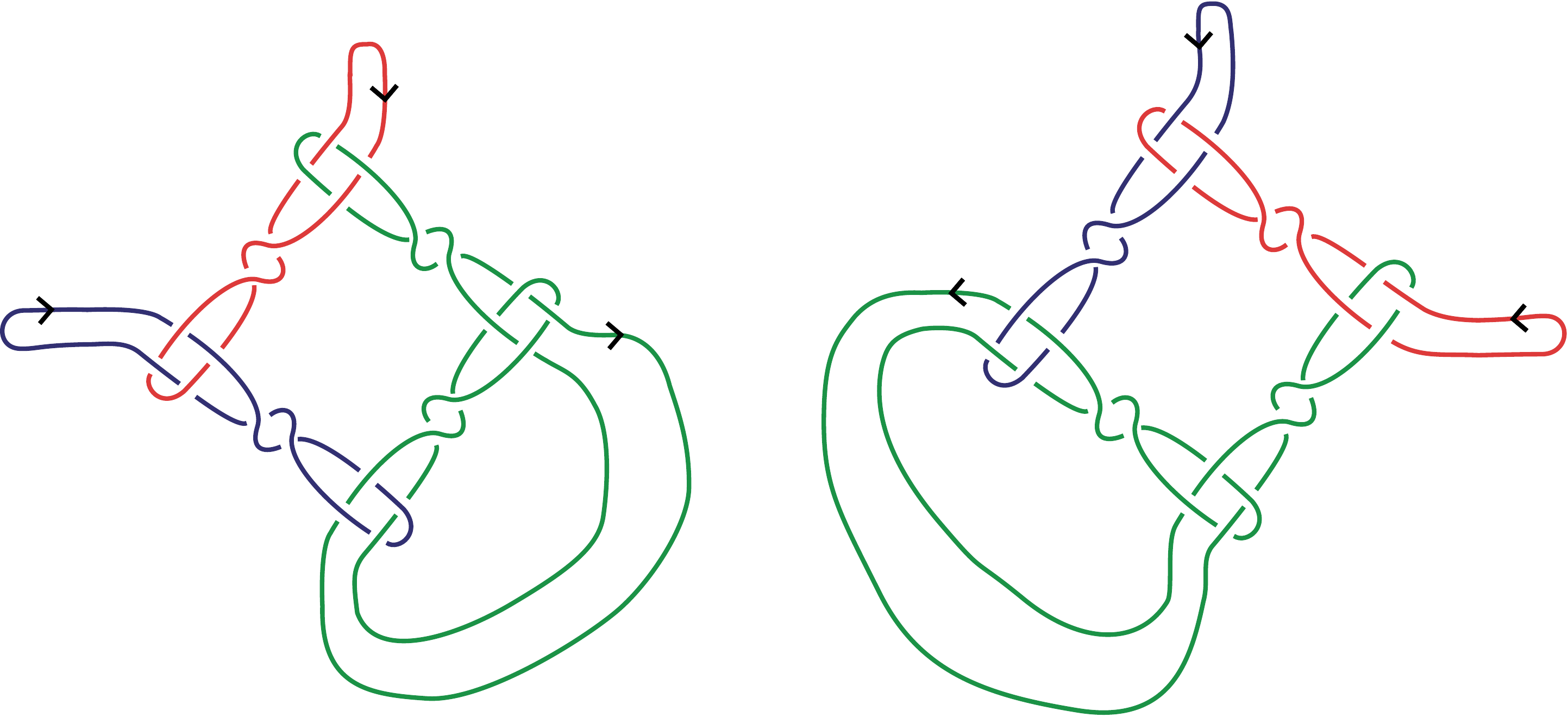}
\caption {The rotants $(L^0_\infty,(L^0_\infty)^R)$, coloured to show the interactions between components. An anticlockwise rotation of $(L^0_\infty)^R$ by $\pi /2$ gives us $L^0_\infty$ with the orientations reversed on each component. \label{fig:inf rotant equivalence}}
\end{figure}

\begin{fact}\label{fact:neg crossings}
In the long exact sequence of Property~\ref{khprop:l.e.s.}, if $D$ is $L_{n+1}$ for some nonnegative integer $n$, then $c$ is given by:
\[
c=
\begin{cases}
7\hspace{10pt}&\text{if }n\text{ is even}\\
-9&\text{if }n\text{ is odd}
\end{cases}
\]
\end{fact}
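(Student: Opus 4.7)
The plan is to compute $c = N_-(D_0) - N_-(D)$ directly from the diagrams. By the construction preceding Figure~\ref{fig:inf resolutions}, the 0-resolution of the lowest parametrised crossing in $D = L_{n+1}$ yields $D_0 = L_n$, so the task reduces to comparing the negative-crossing counts of $L_n$ and $L_{n+1}$. As unoriented diagrams these differ by a single extra negative half-twist in $L_{n+1}$, while as oriented diagrams they additionally differ by a flip of orientation on every strand above the parametrised twist region, forced by the change in parity of the parameter $-20-n$ (see Figure~\ref{fig:param rotants}).

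Let $A$ denote the set of crossings of the underlying unoriented diagram exactly one of whose two strands belongs to the flipping region. Each such crossing changes sign when we pass from $L_n$ to $L_{n+1}$, since a sign change happens precisely when exactly one of the two contributing strands is reversed; all other crossings---in particular every parametrised twist crossing---have the same sign in both diagrams. Write $\alpha$ for the number of crossings in $A$ that are negative under the green (even-parameter) orientation of Figure~\ref{fig:param rotants}, and $\beta$ for the number that are positive. When $n$ is even, so that $L_n$ carries the green orientation and $L_{n+1}$ the red, one obtains
\[
N_-(L_{n+1}) - N_-(L_n) = 1 + (\beta - \alpha),
\]
whence $c = \alpha - \beta - 1$. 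When $n$ is odd the roles of green and red swap, and the identical analysis yields $c = \beta - \alpha - 1$.

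The remaining step is to tally $\alpha$ and $\beta$ from Figure~\ref{fig:param rotants}. Direct inspection gives $\alpha - \beta = 8$, which when substituted into the two cases above produces $c = 7$ for even $n$ and $c = -9$ for odd $n$, as claimed. The main obstacle is precisely this tabulation of signs: one must carefully identify every crossing straddling the flipping region and record its sign under the green orientation. This is routine but delicate bookkeeping, with no conceptual difficulty beyond a careful reading of the figure.
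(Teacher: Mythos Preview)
Your argument is correct and follows essentially the same approach as the paper: both compute $c = N_-(L_n) - N_-(L_{n+1})$ by reading crossing signs off Figure~\ref{fig:param rotants}, with the paper computing each $N_-$ absolutely ($44+n$ when $n$ is even, $36+n$ when $n$ is odd) and then subtracting, whereas you track only the sign changes induced by the parity flip and arrive at $\alpha-\beta=8$ directly. The content is the same---the paper's observation that all $24$ non-twist crossings are negative under the even orientation and that exactly $8$ of them (those in the rotor) switch sign amounts precisely to your $\alpha=8$, $\beta=0$.
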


\begin{proof}
First we observe that, in the notation of Property~\ref{khprop:l.e.s.}, we have $D=L_{n+1}$ and $D_0=L_n$. So, by definition, $c=N_-(L_n)-N_-(L_{n+1})$. See the diagram of the parametrised link $L$ in Figure~\ref{fig:param rotants} for information on the crossing orientations of $L_n$. Since $n\ge 0$, the crossings added by parametrisation to $L_n=L(-20-n)$ are all oriented negatively, regardless of the parity of $n$. Further, we see that if $n$ is even then the 24 crossings which are not due to the added twists are all negative as well, giving $20+n+24=44+n$ negative crossings in total. To get $N_-(L_n)$ when $n$ is odd, we simply observe that changing the orientation induced by $n$ being even to the one for $n$ odd results in 8 of the crossings inside the rotor changing from negative to positive. Therefore we have:
\[
N_-(L_n)=
\begin{cases}
44+n\hspace{10pt}&\text{if }n\text{ is even}\\
36+n&\text{if }n\text{ is odd}
\end{cases}
\]
If $n$ is even, then $c=(44+n)-(36+n+1)=7$; similarly if $n$ is odd, $c=-9$.
\end{proof}

We can now use the family we have constructed to prove Theorem~\ref{thm:main}, as a consequence of:

\begin{prop}\label{prop:l.e.s. isom}
For any nonnegative integer $n$, and $t\in\mathbb{Z}$, we have
\[\text{Kh}_{Q(n+1)}^{t+c}(L_{n+1})\cong\text{Kh}_{Q(n)}^t(L_n)\]
and
\[\text{Kh}_{Q(n+1)}^{t+c}(L_{n+1}^R)\cong\text{Kh}_{Q(n)}^t(L_n^R)\]
where
\[Q(n)=-76+22\lceil n/2\rceil-26\lfloor n/2\rfloor\]
and $c$ is as described in Fact~\ref{fact:neg crossings}.
\end{prop}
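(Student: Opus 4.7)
The plan is to apply the long exact sequence of Property~\ref{khprop:l.e.s.} to $D = L_{n+1}$, resolving its lowest parametrised crossing. This crossing is negative, so in the notation of Property~\ref{khprop:l.e.s.} we have $D_0 = L_n$, while $D_1$ is the 3-component link described in the discussion preceding Fact~\ref{fact:inf support}: explicitly, $D_1 = L_\infty^1$ when $n$ is even (so $n+1$ is odd) and $D_1 = L_\infty^0$ when $n$ is odd. Shifting indices in the long exact sequence so that the middle term sits at homological grading $t+c$ and quantum grading $Q(n+1)$ gives
\[
\text{Kh}^{t+c}_{Q(n+1)+1}(D_1) \to \text{Kh}^{t+c}_{Q(n+1)}(L_{n+1}) \to \text{Kh}^{t}_{Q(n+1)-3c-1}(L_n) \to \text{Kh}^{t+c+1}_{Q(n+1)+1}(D_1).
\]
It then suffices to prove two things: that both flanking $D_1$ terms vanish, and that $Q(n+1) - 3c - 1 = Q(n)$.

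For the vanishing, the relevant quantum grading is $Q(n+1) + 1$. A short unpacking of the floor/ceiling formula gives $Q(n+1) = -54 - 2n$ when $n$ is even and $Q(n+1) = -78 - 2n$ when $n$ is odd. In the even case $Q(n+1) + 1 = -53 - 2n < -29$ for all $n \geq 0$, which lies below the support bound for $\text{Kh}(L_\infty^1)$ from Fact~\ref{fact:inf support}; in the odd case $Q(n+1) + 1 = -77 - 2n < -53$, below the support bound for $\text{Kh}(L_\infty^0)$. Both flanking terms therefore vanish and the middle map is an isomorphism. For the grading identity, I would split by parity and substitute $c$ from Fact~\ref{fact:neg crossings}: when $n$ is even, $c = 7$ yields $Q(n+1) - 3c - 1 = -76 - 2n = Q(n)$, and when $n$ is odd, $c = -9$ yields $Q(n+1) - 3c - 1 = -52 - 2n = Q(n)$, matching the direct computation of $Q(n)$ from the defining formula.

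The argument for $L_n^R$ and $L_{n+1}^R$ runs in exactly the same way, with $D_1$ now one of $(L_\infty^0)^R$ or $(L_\infty^1)^R$; the only new ingredient is that their Khovanov homologies share the support bounds of Fact~\ref{fact:inf support}, which is precisely the content of the remark after Fact~\ref{fact:inf rotant homologies}. The main obstacle here is really just the bookkeeping: pairing each $L_\infty^\epsilon$ with the correct parity of $n$, lining up the shifts in the long exact sequence, and keeping straight the two roles of $c$ (once as the homological shift, once as part of the quantum shift).
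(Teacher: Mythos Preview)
Your proposal is correct and follows essentially the same approach as the paper: apply the long exact sequence at the lowest parametrised crossing of $L_{n+1}$, identify $D_0=L_n$ and $D_1=L_\infty^{\overline{n+1}}$, verify the grading identity $Q(n+1)=Q(n)+3c+1$ by a parity split, and kill the flanking terms using the support bounds of Fact~\ref{fact:inf support}. The only differences from the paper are cosmetic (you center the sequence at $Q(n+1)$ and check $Q(n+1)-3c-1=Q(n)$, whereas the paper sets $q=Q(n)$ and checks the equivalent identity in the other direction).
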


\begin{proof}
The long exact sequence of Property~\ref{khprop:l.e.s.} gives us:
\[
\dotsm\rightarrow \text{Kh}^{t+c}_{q+3c+2}(L^{\overline{n+1}}_\infty) \rightarrow \text{Kh}^{t+c}_{q+3c+1}(L_{n+1}) \rightarrow \text{Kh}^t_q(L_n) \rightarrow \text{Kh}^{t+c+1}_{q+3c+2}(L^{\overline{n+1}}_\infty) \rightarrow\dotsm
\]
where $\overline{n+1}$ denotes the reduction of $n+1$ modulo 2.
\\
\\
Fact~\ref{fact:neg crossings} gives us the identity:
\[
Q(n)+3c+1=Q(n+1)
\]
for any $n$. Indeed, when $n$ is even:
\[
Q(n+1)=-76+22\lceil (n+1)/2\rceil -26\lfloor (n+1)/2\rfloor = -76+22(\lceil n/2\rceil +1)-26(\lfloor n/2\rceil)= Q(n)+22=Q(n)+3c+1
\]
We can similarly check the case when $n$ is odd. So with $q=Q(n)$ we get the long exact sequence:
\[
\dotsm\rightarrow \text{Kh}^{t+c}_{Q(n+1)+1}(L^{\overline{n+1}}_\infty) \rightarrow \text{Kh}^{t+c}_{Q(n+1)}(L_{n+1}) \rightarrow \text{Kh}^t_{Q(n)}(L_n) \rightarrow \text{Kh}^{t+c+1}_{Q(n+1)+1}(L^{\overline{n+1}}_\infty) \rightarrow\dotsm
\]
\\
Now, consider the expression $Q(n+1)+1$ --- when $n$ is even it evaluates to $-2n-53$, and when $n$ is odd it evaluates to $-2n-77$. So $Q(n+1)+1\le-53<-29$ if $n\ge0$ is even, and $Q(n+1)+1\le-80<-53$ if $n\ge1$ is odd. Therefore, by Fact~\ref{fact:inf support}, the vector spaces in the homology of $L^{\overline{n+1}}_\infty$ specified by this long exact sequence will be trivial for any $n\ge0$. As a result, the long exact sequence forces the desired isomorphism:
\[
\text{Kh}^{t+c}_{Q(n+1)}(L_{n+1})\cong\text{Kh}^t_{Q(n)}(L_n)
\]
\\
We can argue in the same way to get the result for $L^R_{n+1}$, making use of Fact~\ref{fact:inf rotant homologies}.
\end{proof}

\begin{proof}[Proof of Theorem~\ref{thm:main}]

Observe that $Q(0)=-76$ and, by direct calculation (see Section~\ref{sec:calcs}),
\[
\text{Kh}_{-76}^{-32}(L_0)\cong\mathbb{Q}^{75}\hspace{15pt}\text{while}\hspace{15pt}\text{Kh}_{-76}^{-32}(L_0^R)\cong\mathbb{Q}^{74}
\]
So Proposition~\ref{prop:l.e.s. isom} tells us that, for any nonnegative integer $n$, there is a vector space isomorphic to $\mathbb{Q}^{75}$ in the Khovanov homology of $L_n$, while, in the same homological and quantum gradings, the homology of $L_n^R$ has a vector space isomorphic to $\mathbb{Q}^{74}$. This means that the infinite family of rotant pairs $(L_n,L_n^R),\,n\ge0$, are all distinguishable by Khovanov homology. However, they are order $4$ rotants, and so by Theorem~\ref{thm:rotant jones}, they must all have matching Jones polynomials.
\end{proof}
\begin{rem}
We have shown a difference between the rotants in their Khovanov homology calculated over $\mathbb{Q}$, although a similar argument works for the homologies calculated with other coefficients (see Section~\ref{sec:calcs}).
\end{rem}

\section{Calculations}\label{sec:calcs}

Finally, we provide the data required to justify claims we have made about the Khovanov homologies of various rotants. All calculations were carried out using the Mathematica package \texttt{knottheory\`{}} \cite{katlas}. First, we find the vector spaces needed to prove the base result (for $(L_0,L_0^R)$) in the proof of Theorem~\ref{thm:main}. We give the vector spaces at quantum grading $-76$ in the Khovanov homologies of the rotants $(L_0,L^R_0)$, with coefficients in $\mathbb{Q}$, $\mathbb{Z}$ and $\mathbb{Z}/2\mathbb{Z}$:

\begin{enumerate}
\item
\[\text{Kh}^t_{-76}(L_0;\mathbb{Q})\cong
\begin{cases}
\mathbb{Q}^{75}\hspace{10pt}&t=-32\\
\mathbb{Q}^{127}&t=-31\\
\mathbb{Q}^{23}&t=-30\\
\{0\}&\text{otherwise}
\end{cases}
\hspace{30pt}
\text{Kh}^t_{-76}(L^R_0;\mathbb{Q})\cong
\begin{cases}
\mathbb{Q}^{74}\hspace{10pt}&t=-32\\
\mathbb{Q}^{124}&t=-31\\
\mathbb{Q}^{21}&t=-30\\
\{0\}&\text{otherwise}
\end{cases}
\]

\item
\[\text{Kh}^t_{-76}(L_0;\mathbb{Z})\cong
\begin{cases}
\mathbb{Z}^{75}\hspace{10pt}&t=-32\\
\mathbb{Z}^{127}\oplus(\mathbb{Z}/2\mathbb{Z})^{99}&t=-31\\
\mathbb{Z}^{23}\oplus(\mathbb{Z}/2\mathbb{Z})^{11}&t=-30\\
\{0\}&\text{otherwise}
\end{cases}
\hspace{30pt}
\text{Kh}^t_{-76}(L^R_0;\mathbb{Z})\cong
\begin{cases}
\mathbb{Z}^{74}\hspace{10pt}&t=-32\\
\mathbb{Z}^{124}\oplus(\mathbb{Z}/2\mathbb{Z})^{99}&t=-31\\
\mathbb{Z}^{21}\oplus(\mathbb{Z}/2\mathbb{Z})^{11}&t=-30\\
\{0\}&\text{otherwise}
\end{cases}
\]

\item Let $\mathbb{F}\cong\mathbb{Z}/2\mathbb{Z}$ be the field with two elements.
\[\text{Kh}^t_{-76}(L_0;\mathbb{F})\cong
\begin{cases}
(\mathbb{F})^{174}\hspace{10pt}&t=-32\\
(\mathbb{F})^{237}&t=-31\\
(\mathbb{F})^{34}&t=-30\\
\{0\}&\text{otherwise}
\end{cases}
\hspace{30pt}
\text{Kh}^t_{-76}(L^R_0;\mathbb{F})\cong
\begin{cases}
(\mathbb{F})^{173}\hspace{10pt}&t=-32\\
(\mathbb{F})^{234}&t=-31\\
(\mathbb{F})^{32}&t=-30\\
\{0\}&\text{otherwise}
\end{cases}
\]
\end{enumerate}

We note that $\dim(\text{Kh}^{-32}_{-76}(L_0))\neq \dim(\text{Kh}^{-32}_{-76}(L^R_0))$ for each set of coefficients, meaning that we can use any of these to make an argument analogous to the proof of Theorem~\ref{thm:main}.
\\
\\
The following pages give data on the complete Khovanov homologies of the rotant pair $(L(2),L^R(2))$ from Figure~\ref{fig:first ex} and the links $L^0_\infty ,\, L^1_\infty$, calculated over $\mathbb{Q}$. We note that the quantum grading supports for nontrivial vector spaces in the homologies of  $L^0_\infty$ and $L^1_\infty$ are the same when calculated with any coefficients, so that Fact~\ref{fact:inf support} still applies and can be used to prove Theorem~\ref{thm:main} for coefficients in, say, $\mathbb{Z}$ or $\mathbb{F}$ rather than $\mathbb{Q}$. The number in cell $(q,t)$ gives the dimension of the vector space at quantum grading $q$ and homological grading $t$ --- where there is no number, the vector space is trivial. Notice that the sum of the dimensions of all the vector spaces in $\text{Kh}(L(2))$ is greater than that of $\text{Kh}(L^R(2))$, and that for any $t,\, q\in\mathbb{Z}$, we have $\dim(\text{Kh}^t_q(L(2)))\ge\dim(\text{Kh}^t_q(L^R(2)))$. This appears to be the case in general for rotants in this family (i.e. for any value of $n$, not just $2$). Also, the homologies of $L^0_\infty$ and $L^1_\infty$ are identical up to a grading shift, since the links differ only in orientation. 
\newpage
\[\underline{\text{Kh}(L(2);\mathbb{Q}):}\]

\begin{footnotesize}
\[\arraycolsep=2pt
\begin{array}{cccccccccccccccccccccc}
 \text{q$\backslash $t}\:\:\:\vline & -20 & -19 & -18 & -17 & -16 & -15 & -14 & -13 & -12 & -11 & -10 & -9 & -8 & -7 & -6 & -5 & -4 & -3 & -2 & -1 & 0 \\
\hline
 -52\;\vline & 1 &  &  &  &  &  &  &  &  &  &  &  &  &  &  &  &  &  &  &  &  \\
 -50 \;\vline&  & 4 &  &  &  &  &  &  &  &  &  &  &  &  &  &  &  &  &  &  &  \\
 -48 \;\vline&  & 1 & 10 &  &  &  &  &  &  &  &  &  &  &  &  &  &  &  &  &  &  \\
 -46 \;\vline&  &  & 4 & 20 &  &  &  &  &  &  &  &  &  &  &  &  &  &  &  &  &  \\
 -44 \;\vline&  &  &  & 10 & 31 & 1 &  &  &  &  &  &  &  &  &  &  &  &  &  &  &  \\
 -42 \;\vline&  &  &  &  & 20 & 42 & 4 &  &  &  &  &  &  &  &  &  &  &  &  &  &  \\
 -40 \;\vline&  &  &  &  &  & 31 & 51 & 10 &  &  &  &  &  &  &  &  &  &  &  &  &  \\
 -38 \;\vline&  &  &  &  &  &  & 42 & 56 & 19 &  &  &  &  &  &  &  &  &  &  &  &  \\
 -36 \;\vline&  &  &  &  &  &  &  & 50 & 57 & 28 &  &  &  &  &  &  &  &  &  &  &  \\
 -34 \;\vline&  &  &  &  &  &  &  &  & 52 & 59 & 38 &  &  &  &  &  &  &  &  &  &  \\
 -32 \;\vline&  &  &  &  &  &  &  &  &  & 47 & 55 & 42 &  &  &  &  &  &  &  &  &  \\
 -30 \;\vline&  &  &  &  &  &  &  &  &  &  & 40 & 55 & 43 &  &  &  &  &  &  &  &  \\
 -28 \;\vline&  &  &  &  &  &  &  &  &  &  &  & 27 & 50 & 39 &  &  &  &  &  &  &  \\
 -26 \;\vline&  &  &  &  &  &  &  &  &  &  &  &  & 18 & 47 & 29 &  &  &  &  &  &  \\
 -24 \;\vline&  &  &  &  &  &  &  &  &  &  &  &  &  & 7 & 40 & 22 &  &  &  &  &  \\
 -22 \;\vline&  &  &  &  &  &  &  &  &  &  &  &  &  &  & 4 & 30 & 11 &  &  &  &  \\
 -20 \;\vline&  &  &  &  &  &  &  &  &  &  &  &  &  &  &  & 1 & 22 & 6 &  &  &  \\
 -18 \;\vline&  &  &  &  &  &  &  &  &  &  &  &  &  &  &  &  & 1 & 11 & 2 &  &  \\
 -16 \;\vline&  &  &  &  &  &  &  &  &  &  &  &  &  &  &  &  &  &  & 6 &  &  \\
 -14 \;\vline&  &  &  &  &  &  &  &  &  &  &  &  &  &  &  &  &  &  &  & 2 & 1 \\
 -12 \;\vline&  &  &  &  &  &  &  &  &  &  &  &  &  &  &  &  &  &  &  &  & 1 \\
\end{array}\]
\end{footnotesize}

\[\underline{\text{Kh}(L^R(2);\mathbb{Q}):}\]

\begin{footnotesize}
\[\arraycolsep=2pt
\begin{array}{cccccccccccccccccccccc}
 \text{q$\backslash $t}\:\:\vline & -20 & -19 & -18 & -17 & -16 & -15 & -14 & -13 & -12 & -11 & -10 & -9 & -8 & -7 & -6 & -5 & -4 & -3 & -2 & -1 & 0 \\
\hline
 -52\vline & 1 &  &  &  &  &  &  &  &  &  &  &  &  &  &  &  &  &  &  &  &  \\
 -50\vline &  & 4 &  &  &  &  &  &  &  &  &  &  &  &  &  &  &  &  &  &  &  \\
 -48\vline &  & 1 & 10 &  &  &  &  &  &  &  &  &  &  &  &  &  &  &  &  &  &  \\
 -46\vline &  &  & 4 & 20 &  &  &  &  &  &  &  &  &  &  &  &  &  &  &  &  &  \\
 -44\vline &  &  &  & 10 & 31 & 1 &  &  &  &  &  &  &  &  &  &  &  &  &  &  &  \\
 -42\vline &  &  &  &  & 20 & 41 & 3 &  &  &  &  &  &  &  &  &  &  &  &  &  &  \\
 -40\vline &  &  &  &  &  & 31 & 50 & 9 &  &  &  &  &  &  &  &  &  &  &  &  &  \\
 -38\vline &  &  &  &  &  &  & 41 & 53 & 17 &  &  &  &  &  &  &  &  &  &  &  &  \\
 -36\vline &  &  &  &  &  &  &  & 49 & 52 & 24 &  &  &  &  &  &  &  &  &  &  &  \\
 -34\vline &  &  &  &  &  &  &  &  & 50 & 53 & 34 &  &  &  &  &  &  &  &  &  &  \\
 -32\vline &  &  &  &  &  &  &  &  &  & 43 & 47 & 38 &  &  &  &  &  &  &  &  &  \\
 -30\vline &  &  &  &  &  &  &  &  &  &  & 36 & 47 & 39 &  &  &  &  &  &  &  &  \\
 -28\vline &  &  &  &  &  &  &  &  &  &  &  & 23 & 44 & 37 &  &  &  &  &  &  &  \\
 -26\vline &  &  &  &  &  &  &  &  &  &  &  &  & 13 & 39 & 26 &  &  &  &  &  &  \\
 -24\vline &  &  &  &  &  &  &  &  &  &  &  &  &  & 5 & 37 & 21 &  &  &  &  &  \\
 -22\vline &  &  &  &  &  &  &  &  &  &  &  &  &  &  & 1 & 26 & 10 &  &  &  &  \\
 -20\vline &  &  &  &  &  &  &  &  &  &  &  &  &  &  &  &  & 21 & 6 &  &  &  \\
 -18\vline &  &  &  &  &  &  &  &  &  &  &  &  &  &  &  &  &  & 10 & 2 &  &  \\
 -16\vline &  &  &  &  &  &  &  &  &  &  &  &  &  &  &  &  &  &  & 6 &  &  \\
 -14\vline &  &  &  &  &  &  &  &  &  &  &  &  &  &  &  &  &  &  &  & 2 & 1 \\
 -12\vline &  &  &  &  &  &  &  &  &  &  &  &  &  &  &  &  &  &  &  &  & 1 \\
\end{array}\]
\end{footnotesize}

For interest, we also record the common reduced Jones polynomial of these rotants:

\begin{align*}
V_{L(2)}(q)=V_{L^R(2)}(q)=q^{-51/2}(&-1+5q-14q^2+30q^3-50q^4+68q^5-78q^6+73q^7-52q^8+21q^9+13q^{10}\\
& -41q^{11}+57q^{12}-57q^{13}+46q^{14}-31q^{15}+16q^{16}-8q^{17}+2q^{18}-q^{19})
\end{align*}

\newpage

\[\underline{\text{Kh}(L^0_\infty;\mathbb{Q}):}\]

\begin{footnotesize}
\[\arraycolsep=2pt
\begin{array}{cccccccccccccccccccccc}
 \text{q$\backslash $t}\:\:\vline & -20 & -19 & -18 & -17 & -16 & -15 & -14 & -13 & -12 & -11 & -10 & -9 & -8 & -7 & -6 & -5 & -4 & -3 & -2 & -1 & 0 \\
\hline
 -53\vline & 1 &  &  &  &  &  &  &  &  &  &  &  &  &  &  &  &  &  &  &  &  \\
 -51\vline &  & 3 &  &  &  &  &  &  &  &  &  &  &  &  &  &  &  &  &  &  &  \\
 -49\vline &  & 1 & 6 &  &  &  &  &  &  &  &  &  &  &  &  &  &  &  &  &  &  \\
 -47\vline &  &  & 3 & 11 &  &  &  &  &  &  &  &  &  &  &  &  &  &  &  &  &  \\
 -45\vline &  &  &  & 6 & 15 & 1 &  &  &  &  &  &  &  &  &  &  &  &  &  &  &  \\
 -43\vline &  &  &  &  & 11 & 19 & 2 &  &  &  &  &  &  &  &  &  &  &  &  &  &  \\
 -41\vline &  &  &  &  &  & 15 & 23 & 5 &  &  &  &  &  &  &  &  &  &  &  &  &  \\
 -39\vline &  &  &  &  &  &  & 19 & 24 & 9 &  &  &  &  &  &  &  &  &  &  &  &  \\
 -37\vline &  &  &  &  &  &  &  & 22 & 22 & 10 &  &  &  &  &  &  &  &  &  &  &  \\
 -35\vline &  &  &  &  &  &  &  &  & 22 & 25 & 15 &  &  &  &  &  &  &  &  &  &  \\
 -33\vline &  &  &  &  &  &  &  &  &  & 17 & 19 & 15 &  &  &  &  &  &  &  &  &  \\
 -31\vline &  &  &  &  &  &  &  &  &  &  & 16 & 21 & 15 &  &  &  &  &  &  &  &  \\
 -29\vline &  &  &  &  &  &  &  &  &  &  &  & 9 & 19 & 15 &  &  &  &  &  &  &  \\
 -27\vline &  &  &  &  &  &  &  &  &  &  &  &  & 8 & 17 & 9 &  &  &  &  &  &  \\
 -25\vline &  &  &  &  &  &  &  &  &  &  &  &  &  & 1 & 15 & 9 &  &  &  &  &  \\
 -23\vline &  &  &  &  &  &  &  &  &  &  &  &  &  &  & 3 & 10 & 4 &  &  &  &  \\
 -21\vline &  &  &  &  &  &  &  &  &  &  &  &  &  &  &  &  & 9 & 3 &  &  &  \\
 -19\vline &  &  &  &  &  &  &  &  &  &  &  &  &  &  &  &  & 1 & 4 & 1 &  &  \\
 -17\vline &  &  &  &  &  &  &  &  &  &  &  &  &  &  &  &  &  &  & 3 &  &  \\
 -15\vline &  &  &  &  &  &  &  &  &  &  &  &  &  &  &  &  &  &  &  & 1 & 1 \\
 -13\vline &  &  &  &  &  &  &  &  &  &  &  &  &  &  &  &  &  &  &  &  & 1 \\
\end{array}\]
\end{footnotesize}

\[\underline{\text{Kh}(L^1_\infty;\mathbb{Q}):}\]

\begin{footnotesize}
\[\arraycolsep=4pt
\begin{array}{cccccccccccccccccccccc}
 \text{q$\backslash $t}\:\:\vline & -12 & -11 & -10 & -9 & -8 & -7 & -6 & -5 & -4 & -3 & -2 & -1 & 0 & 1 & 2 & 3 & 4 & 5 & 6 & 7 & 8 \\
\hline
 -29\vline & 1 &  &  &  &  &  &  &  &  &  &  &  &  &  &  &  &  &  &  &  &  \\
 -27\vline &  & 3 &  &  &  &  &  &  &  &  &  &  &  &  &  &  &  &  &  &  &  \\
 -25\vline &  & 1 & 6 &  &  &  &  &  &  &  &  &  &  &  &  &  &  &  &  &  &  \\
 -23\vline &  &  & 3 & 11 &  &  &  &  &  &  &  &  &  &  &  &  &  &  &  &  &  \\
 -21\vline &  &  &  & 6 & 15 & 1 &  &  &  &  &  &  &  &  &  &  &  &  &  &  &  \\
 -19\vline &  &  &  &  & 11 & 19 & 2 &  &  &  &  &  &  &  &  &  &  &  &  &  &  \\
 -17\vline &  &  &  &  &  & 15 & 23 & 5 &  &  &  &  &  &  &  &  &  &  &  &  &  \\
 -15\vline &  &  &  &  &  &  & 19 & 24 & 9 &  &  &  &  &  &  &  &  &  &  &  &  \\
 -13\vline &  &  &  &  &  &  &  & 22 & 22 & 10 &  &  &  &  &  &  &  &  &  &  &  \\
 -11\vline &  &  &  &  &  &  &  &  & 22 & 25 & 15 &  &  &  &  &  &  &  &  &  &  \\
 -9 \hspace{1pt}\:\,\vline&  &  &  &  &  &  &  &  &  & 17 & 19 & 15 &  &  &  &  &  &  &  &  &  \\
 -7 \hspace{1pt}\:\,\vline&  &  &  &  &  &  &  &  &  &  & 16 & 21 & 15 &  &  &  &  &  &  &  &  \\
 -5 \hspace{1pt}\:\,\vline&  &  &  &  &  &  &  &  &  &  &  & 9 & 19 & 15 &  &  &  &  &  &  &  \\
 -3 \hspace{1pt}\:\,\vline&  &  &  &  &  &  &  &  &  &  &  &  & 8 & 17 & 9 &  &  &  &  &  &  \\
 -1 \hspace{1pt}\:\,\vline&  &  &  &  &  &  &  &  &  &  &  &  &  & 1 & 15 & 9 &  &  &  &  &  \\
 \hspace{6pt}1\hspace{1pt}\:\,\vline&  &  &  &  &  &  &  &  &  &  &  &  &  &  & 3 & 10 & 4 &  &  &  &  \\
 \hspace{6pt}3\hspace{1pt}\:\,\vline&  &  &  &  &  &  &  &  &  &  &  &  &  &  &  &  & 9 & 3 &  &  &  \\
 \hspace{6pt}5\hspace{1pt}\:\,\vline&  &  &  &  &  &  &  &  &  &  &  &  &  &  &  &  & 1 & 4 & 1 &  &  \\
 \hspace{6pt}7\hspace{1pt}\:\,\vline&  &  &  &  &  &  &  &  &  &  &  &  &  &  &  &  &  &  & 3 &  &  \\
 \hspace{6pt}9\hspace{1pt}\:\,\vline&  &  &  &  &  &  &  &  &  &  &  &  &  &  &  &  &  &  &  & 1 & 1 \\
 \hspace{2pt}11\hspace{1pt}\:\,\vline &  &  &  &  &  &  &  &  &  &  &  &  &  &  &  &  &  &  &  &  & 1 \\
\end{array}\]
\end{footnotesize}
\newpage

\section*{Acknowledgements}\label{sec:ack}
I would like to thank the University of Glasgow and the Carnegie Trust for giving me the opportunity to undertake this research and providing financial support. I am also grateful for the guidance of my supervisor, Liam Watson, as well as for the knowledge and enthusiasm he shared with me throughout this project. Finally I would like to thank Duncan McCoy for valuable feedback on an earlier draft.

\bibliographystyle{plain} \bibliography{REFERENCES}

\end{document}